\theoremstyle{plain}
\newtheorem{theorem}{Theorem}
\newtheorem{lemma}[theorem]{Lemma}
\newtheorem{fact}[theorem]{Fact}
\theoremstyle{definition}
\theoremstyle{remark}
\newcommand{\cK}{\mathcal{K}}
\newcommand{\ccK}{\cK(n, \mathbf{P})}
\newcommand{\cHH}{\mathcal{H}}
\newcommand{\tH}{\widehat{\mathcal{H}}}
\newcommand{\cH}{\overline{\mathcal{H}}}
\newcommand{\zzz}{\{0,1\}^n}
\DeclareMathOperator{\diam}{diam}
\newcommand{\eps}{\epsilon}
\newcommand{\pr}{\mathbb{P}}
\newcommand{\wN}{\widehat{N}}
\title{On the diameter of  Kronecker graphs}
\author[uam]{Tomasz {\L}uczak}
\ead{tomasz@amu.edu.pl}
\author[uam]{Justyna Tabor}
\ead{just.tabor@gmail.com}
\address[uam]{Faculty of Mathematics and Computer Science,
	 Adam Mickiewicz University, Pozna\'{n}, Poland}
\begin{document}

	\begin{abstract}
It is shown that a.a.s.\ as soon as a Kronecker graph becomes connected its diameter is bounded by a constant.

	\end{abstract}

	\maketitle



	\section{Introduction}
	A Kronecker graph is a random graph with vertex set $V=\zzz$, where the probability that two vertices $u, v\in V$ are adjacent strongly depends on the structure of the vectors $u=(u_{1},\ldots,u_{n})$, and $v=(v_{1},\ldots,v_{n})$. More specifically,
	let $\mathbf{P}$ be a symmetric matrix
	$$
	\mathbf{P}=\bordermatrix{&1&0\cr
		1&\alpha & \beta\cr
		0&\beta  & \gamma},
	$$
	where zeros and ones are labels of rows and columns of $\mathbf{P}$,
	$\alpha, \beta, \gamma\in [0,1]$, and $\alpha\ge \gamma$. In the Kronecker graph $\cK(n, \mathbf{P})$
	two vertices $u=(u_{1},\ldots,u_{n}), v=(v_{1},\ldots,v_{n})\in V=\zzz$ are
	adjacent with probability
	\begin{equation}
	\nonumber
	p_{u,v}=\prod\limits_{i=1}^n \mathbf{P}[u_{i},v_{i}],
	\end{equation}
	independently for each such pair.

	Kronecker graphs were introduced
	by Leskovec, Chakrabarti, Kleinberg and Faloutsos in \cite{Leskovec2005}
	to model some  real world networks  (see also \cite{Leskovec2010},
	\cite{Leskovec2007},  \cite{Kolda}). Since then they have been studied by
	several authors but their properties are still far from being well understood
	(see  \cite{FK} and references therein). In particular,
	Radcliffe and Young \cite{RadcliffeYoung}
	determined the exact threshold for the property that $\ccK$ is connected,
	supplementing a slightly weaker  result of Mahdian and Xu~\cite{MahdianXu}.

	\begin{theorem}\label{thm:con}
		$$
		\lim_{n\to\infty}\mathbb{P}(\ccK \mbox{ is connected})=\left\{
		\begin{array}{ll}
		0&\mbox{if }\beta+\gamma=1,\: \beta\neq1\\
		0&\mbox{if }\beta=1,\:\alpha=\gamma=0\\
		1&\mbox{if }\beta=1,\;\alpha>0\mbox{ and }\gamma=0\\
		1&\mbox{if }\beta+\gamma>1.
		\end{array}
		\right.
		$$
	\end{theorem}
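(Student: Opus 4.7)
The plan is to treat the four cases of the statement separately.

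Case~2 ($\beta=1,\;\alpha=\gamma=0$) is entirely deterministic: the factor $\mathbf{P}[u_i,v_i]$ vanishes whenever $u_i=v_i$, so $p_{u,v}=\prod_i\mathbf{P}[u_i,v_i]$ equals $1$ exactly when $v=\bar u$ and $0$ otherwise. Hence $\ccK$ is the perfect matching $\{\{u,\bar u\}\}$, which is disconnected for $n\geq 2$. For Case~1 ($\beta+\gamma=1$, $\beta\neq 1$, so $\gamma>0$) the plan is to exhibit an a.a.s.\ isolated vertex. A direct calculation for a vertex $v$ of Hamming weight $k$ yields
\[
\mathbb{E}[\deg v]=(\alpha+\beta)^{k}(\beta+\gamma)^{n-k}=(\alpha+\beta)^{k},
\]
so the value of $\alpha+\beta$ selects the candidate isolated vertices. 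If $\alpha+\beta<1$ the all-ones vertex $\mathbf{1}$ has expected degree $(\alpha+\beta)^{n}\to 0$ and is a.a.s.\ isolated. If $\alpha+\beta\geq 1$ we fix a constant $k\geq 1$: the $\binom{n}{k}$ weight-$k$ vertices are each isolated with probability tending to $e^{-(\alpha+\beta)^{k}}$, so the expected number of isolated vertices diverges, and a second-moment computation --- exploiting that $p_{u,v_1}p_{u,v_2}$ is uniformly small so $\mathbb{P}(v_1,v_2\text{ both isolated})\approx\mathbb{P}(v_1\text{ isolated})\mathbb{P}(v_2\text{ isolated})$ --- gives an isolated vertex a.a.s.

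For the connected cases one needs every vertex in a single component. In Case~4 ($\beta+\gamma>1$), the constraints $\alpha\geq\gamma>1-\beta$ force $\alpha+\beta>1$ too, so $\mathbb{E}[\deg v]\geq\mu^{n}$ for $\mu=\min(\alpha+\beta,\beta+\gamma)>1$ and every $v$. I would first show concentration of degree (each $\deg v$ is a sum of $2^{n}$ independent Bernoullis, so Chernoff gives super-exponential concentration and allows a union bound over vertices) and then establish connectivity by estimating common neighbourhoods
\[
\mathbb{E}|N(u)\cap N(v)|=\prod_{i}\bigl(\mathbf{P}[u_i,0]\mathbf{P}[v_i,0]+\mathbf{P}[u_i,1]\mathbf{P}[v_i,1]\bigr),
\]
or, for pairs where this product is not exponential, by inducting on $n$ through the Kronecker decomposition $\ccK=\mathcal{K}(n-1,\mathbf{P})\otimes\mathbf{P}$ and coupling the two copies of $\mathcal{K}(n-1,\mathbf{P})$ via their cross-edges. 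For Case~3 ($\beta=1,\alpha>0,\gamma=0$), the identity $p_{u,\bar u}=\beta^{n}=1$ places the perfect matching deterministically inside $\ccK$, so no vertex is isolated; the remaining random edges exist only when $u\vee v=\mathbf{1}$ and carry probability $\alpha^{|u\wedge v|}$, and the task reduces to showing that these edges merge the matching into one component. In the Kronecker decomposition the block of vertices with first coordinate $0$ has no internal edges (since $\gamma=0$), so every such vertex is joined to the opposite block by exactly a copy of $\mathcal{K}(n-1,\mathbf{P})$, which invites induction on $n$.

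The main obstacle is Case~3, where the admissible random edges sit in a sparse subgraph of the Boolean lattice and their probability decays as $\alpha^{|u\wedge v|}$; one must locate enough low-overlap edges to connect the matching despite this exponential decay, and large expected degrees are not available to help. A secondary subtlety arises in Case~1 when $\alpha+\beta>1$: only polynomially many vertices are isolated in expectation, the naive Poisson heuristic gives only a positive limit for the probability of disconnectedness, and a genuine second-moment estimate is needed to push that limit to $1$.
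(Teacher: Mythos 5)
Theorem~\ref{thm:con} is not proved in this paper; it is stated and attributed to Radcliffe and Young, building on Mahdian and Xu, with no in-text proof, so there is nothing here to compare your sketch against line by line. Evaluating it on its own terms: the disconnected cases are on solid footing. Case~2 is indeed deterministic, as you say. In Case~1, note that $\alpha\geq\gamma=1-\beta$ always forces $\alpha+\beta\geq1$, so your subcase $\alpha+\beta<1$ never arises; but the weight-$k$ isolated-vertex count does concentrate, because the edges of $\ccK$ are independent, so for $v_1\neq v_2$ the two isolation events share only the single edge $\{v_1,v_2\}$, giving $\mathbb{P}(v_1,v_2\text{ both isolated})=\mathbb{P}(v_1\text{ isolated})\,\mathbb{P}(v_2\text{ isolated})/(1-p_{v_1,v_2})$ with $p_{v_1,v_2}\to 0$. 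Your stated reason, the smallness of $p_{u,v_1}p_{u,v_2}$, is not the operative one --- there is no common-neighbour correlation to control in this model, since distinct potential edges are independent.

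The gaps are in the connected cases. In Case~4 you derive a super-exponential minimum degree, but large minimum degree by itself never implies connectivity (two disjoint dense graphs have it), and a union bound over all $2^{2^n}$ cuts cannot be carried out naively; the essential content of the Mahdian--Xu and Radcliffe--Young arguments is precisely how to handle the cuts, e.g.\ by showing a suitable layer of the cube induces a connected subgraph and that every vertex reaches it. You gesture at this (``common neighbourhoods'', ``Kronecker decomposition induction'') without carrying it through, and the common-neighbourhood expectation you write down does not on its own control all cuts. In Case~3 you correctly identify that $\ccK$ is a deterministic perfect matching plus random edges supported on pairs with $u\vee v=\mathbf{1}$, each present with probability $\alpha^{|u\wedge v|}$, and then explicitly stop at ``the task reduces to showing that these edges merge the matching into one component''; that reduction is the entire difficulty of the case, not a remark. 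As written, the sketch does not constitute a proof of either connected case.
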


	The main result of this work states that as soon as
	$\ccK$ becomes connected its diameter is bounded by a constant.

	\begin{theorem}\label{thm:diam}
		If either $\beta+\gamma>1$, or $\beta=1$, $\alpha>0$ and $\gamma=0$,
		then  there exists a constant $a=a(\alpha, \beta,\gamma)$ such that
		a.a.s. $\diam(\ccK)\le a$.
	\end{theorem}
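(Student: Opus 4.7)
The plan is to show that, for a sufficiently large constant $r_0 = r_0(\alpha, \beta, \gamma)$, a.a.s.\ every pair $u, v \in V = \zzz$ is joined by a walk of length $r_0$ in $\ccK$. The starting observation is a walk-count identity: letting $W_r(u,v)$ denote the number of walks of length $r$ from $u$ to $v$ in $\ccK$,
\[
\mathbb{E}[W_r(u,v)] \;\ge\; \prod_{i=1}^n (\mathbf{P}^r)[u_i, v_i],
\]
because the right-hand side, expanded coordinate-wise, is exactly $\sum_P \prod_{j=1}^r p_{w_{j-1},w_j}$, and the probability that all edges of a walk $P=(u=w_0,w_1,\dots,w_r=v)$ are present is $\prod_{e\in E(P)} p_e$, which is at least the product with multiplicities since $p_e\le 1$.

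Next I would verify that in both regimes the Perron eigenvalue of the symmetric matrix $\mathbf{P}$ is strictly larger than $1$: directly,
\[
\lambda_1 \;=\; \tfrac{1}{2}\bigl(\alpha+\gamma+\sqrt{(\alpha-\gamma)^2+4\beta^2}\bigr) \;\ge\; \tfrac{\alpha+\gamma}{2}+\beta \;\ge\; \beta+\gamma,
\]
which is $>1$ in case~1 (where $\alpha\ge\gamma$), and $\lambda_1=(\alpha+\sqrt{\alpha^2+4})/2>1$ in case~2 (where $\alpha>0$). Since $\mathbf{P}$ in case~1, and $\mathbf{P}^2$ in case~2, have strictly positive entries, Perron--Frobenius gives $(\mathbf{P}^r)[i,j]\sim \lambda_1^r v_i v_j$ for the strictly positive top eigenvector $v$. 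So for $r_0$ large enough, every entry of $\mathbf{P}^{r_0}$ exceeds some constant $c>1$, which yields $\mathbb{E}[W_{r_0}(u,v)]\ge c^n$ uniformly in $u,v$.

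To finish, it suffices to establish a uniform tail bound $\pr(W_{r_0}(u,v)=0)=o(4^{-n})$ and then union-bound over the at most $4^n$ pairs. For $r_0=2$, the walks $u\to w\to v$ over distinct $w$ use pairwise disjoint edge sets, so their indicators are independent Bernoullis and $\pr(W_2(u,v)=0)\le\exp(-\mathbb{E}[W_2(u,v)])$; this already beats the union bound whenever $\mathbb{E}[W_2(u,v)]\ge c^n$. For $r_0\ge 3$, walks may share edges, and I would invoke Janson's inequality, controlling the correlation sum $\Delta_{r_0}=\sum_{P\sim P'}\pr(P\text{ and }P'\text{ both present})$ by a case analysis on the shared-edge patterns.

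\textbf{Main obstacle.} The main difficulty is the bound on $\Delta_{r_0}$ for $r_0\ge 3$, which is needed precisely when the spectral gap of $\mathbf{P}$ is so small that $\mathbb{E}[W_2(u,v)]$ is not already exponentially large for every pair. In case~2 with $\alpha$ close to $0$, individual edge probabilities can be as small as $\alpha^n$, so walk contributions are extremely uneven and Janson bounds must be applied after restricting to a ``typical'' subfamily of walks, e.g.\ those whose intermediate vertices have roughly $n/2$ ones. A helpful auxiliary structure is the deterministic backbone of case~2: every vertex $v$ is a.s.\ adjacent to its bitwise complement $\bar v$ (since then $\mathbf{P}[v_i,\bar v_i]=\beta=1$ for every $i$), which lets vertices with degenerate degree behaviour (such as $\mathbf{0}$) escape to a vertex with exponentially many neighbours in a single step and plug into the Janson/expansion argument.
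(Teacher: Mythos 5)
Your spectral computation is correct, the walk-count identity $\mathbb{E}[W_r(u,v)]\ge\prod_i(\mathbf P^r)[u_i,v_i]$ is valid, and the observation that all entries of $\mathbf P^{r_0}$ eventually exceed any fixed constant is sound (with the small caveat, which you catch, that in the $\beta=1$ case one must pass through $\mathbf P^2$ for positivity). This is a genuinely different route from the paper, which never counts walks: the paper first reduces to the middle layer, then randomly splits the edge set of that induced subgraph into $n^2$ sparse pieces, and uses a symmetry/self-comparison argument (Lemma~\ref{l:main}). In the symmetrised subgraph $\tH$, two middle-layer vertices $u,v$ at small Hamming distance have $k$-neighbourhoods with \emph{identical} distributions; if these were disjoint with non-negligible probability, the $k$-neighbourhood of $v$ alone would be forced to grow faster than $\xi^{n/2}$ per level and exceed $2^n$, a contradiction. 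The payoff of that construction is precisely that no second-moment or correlation-sum estimate is ever required.

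The gap in your proposal is where you flag it, and it is not a routine technicality. The concentration step $\pr(W_{r_0}(u,v)=0)=o(4^{-n})$ is established only for $r_0=2$, but $r_0=2$ covers only a corner of the regime: $(\mathbf P^2)[0,0]=\beta^2+\gamma^2$, which is $0.72<1$ at $\beta=\gamma=0.6$ even though $\beta+\gamma>1$, so $\mathbb E[W_2(\mathbf 0,\mathbf 0)]$ is exponentially \emph{small} and no union bound is possible. For $r_0\ge3$ you propose Janson's inequality with $\Delta_{r_0}=\sum_{P\sim P'}\pr(\text{both present})$, but you leave this estimate entirely undone. In a Kronecker graph the per-walk probabilities range over many orders of magnitude (down to $\min\{\alpha,\gamma\}^{rn}$), so the naive correlation sum is dominated by a thin set of unusually heavy pairs of walks; your proposed fix of restricting to walks through a typical layer changes both $\mu$ and $\Delta$, the typical layer depends strongly on $u$ and $v$ (a walk launched from near $\mathbf 0$ concentrates on a very different layer than one launched from the middle, cf.\ Lemma~\ref{l:m1}), and for the argument to close one must show $\Delta_{r_0}/\mu^2\le\zeta^n$ for some $\zeta<1$ uniformly over all $4^n$ pairs while confirming the restriction preserves $\mu\ge c^n$. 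Until that calculation is actually carried out, together with a verification that it uses $\beta+\gamma>1$ (or $\beta=1,\alpha>0$) sharply, the proposal remains a plausible plan rather than a proof. The deterministic backbone $v\sim\bar v$ in the $\beta=1$ case is a correct observation (the paper also uses it), but it does not by itself close the Janson estimate.
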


	\section{The idea of the proof}

	In order to sketch our argument let us recall how one shows that
	the diameter is bounded from above for the binomial model of
	random graph $G(N,p)$, and for many other random graph models.
	Typically, since random
	graphs are good expanders,  it is proven first that for some small
	$k$ the $k$-neighbourhood of each vertex is
	much larger than $\sqrt N$. Then, in the second part of the proof, one argues
	that since two random subsets of vertices
	of size larger than $\sqrt N$  intersect with large probability,
	each pair of vertices is a.a.s. connected by a path of length at most $2k$.
	In \cite{MahdianXu}, the diameter of $\ccK$ was examined under condition $\gamma<\beta<\alpha$. This case was solved using the standard results for binomial random graphs.
However, for other cases  this procedure fails completely. The main reason is that
most neighbours of a given vertex $v$  have a similar structure, and so
	the events    `$x\sim v$' and `$y\sim v$' are strongly correlated.
	Thus, the $k$-neighbourhood of a given vertex is very far from being a random subset,
	which is crucial for the second step of the procedure. Even more importantly,
	we do not understand  expanding properties of $\ccK$  and it is hard to control
	how fast the $k$-neighbourhoods of  a vertex $\ccK$ grows, which in most
	of the other random graph models is quite easy to investigate.

	Thus, we  apply a different approach.
	We consider two vertices,  $v$ and $u$
	which are very similar to each other (more specifically, we choose both of them
	from the middle layer of the $n$-cube and assume that they are at  small
	Hamming distance from each other). Then we generate their neighbourhoods
	at the same time until, for some $k$, we observe that
	the $k$-neighbourhood of $v$ would not expand on an expected rate.
	This is because many, in fact most,  candidates for  $(k+1)$-neighbours of~$v$
	has already been placed in the $i$-neighbourhood of $v$ for some $i\le k$.
	However, the chance that a vertex $x$ is in the  $i$-neighbourhood of $v$ is roughly
	the same as the probability that $x$ is in the  $i$-neighbourhood of $u$ so, if most
	potential $(k+1)$-neighbours of $v$ are already in its $k$-th neighbourhood,
	many of them are also in the  $k$-th neighbourhood of $u$. Consequently,
	there is a path of length at most $2k$ joining $v$ and $u$.

The structure of the paper is the following. First  we  treat
 a special  case $\beta=1$. Then we present the
crucial part of our argument showing that the subgraph  induced in of $\ccK$ by its middle layer
has a.a.s.\ a small diameter.
Finally, we complete the proof showing
	that a.a.s. each vertex of $\ccK$ is connected to  the middle layer by a short path.

\section{Case $\beta=1$}
In this section we show that if $\beta=1$, $\alpha>0$, and $\gamma=0$,
then the diameter of $\ccK$ is a.a.s. bounded by a constant.
This set of parameters $\alpha,\beta,\gamma$ is somewhat special as it is the only case, when $\gamma+\beta= 1$ and still
 $\ccK$ is a.a.s. connected.

We introduce some notation, we shall use throughout the paper.
By ${d}(v,u)$ we denote a Hamming distance between two vertices $v$ and $u$ and
$w(v)$ stands for the weight of a vertex $v=(v_{1},\ldots,v_{n})$, i.e.\
the number of ones in its label
	$$
	w(v)=\sum\limits_{i=1}^nv_i.
	$$
For a vertex $v=(v_1,\ldots,v_n)$, we set $\bar v=(1-v_1,\ldots,1-v_n)$.

Now let us go back to the case $\beta=1$.
Note  that
$$
\mathbb{P}(v\sim \bar v)=\beta^n=1,
$$
and observe that either $v$ or $\bar v$ has weight at least $n/2$.
Thus, to show the assertion it is
enough to verify that a.a.s. there exists a path of bounded length between every pair of vertices in $R$ defined as
$$
R=\{v\in V:w(v)\ge n/2\}.
$$

Let $v\in R$ be a vertex of weight $n/2\le w(v)< n$.
We show that it is joined  to the vertex $(1,1,\dots,1)$
by a short path. To this end let $\eta\in (0,1)$ denote the largest solution of
the equality
$$
\alpha^{\eta}(\alpha^2+1)^{1/2}=1+\eta.
$$
Let $u\in R$ be such
that $w(u)=w(v)+t$ and $d(v,u)=t$, where $0<t<\eta n$.
Consider a vertex $x$ such that if $v_i=0$, then $x_i=1$, and $w(x)=n-w(v)+j$,
for some $j\in[w(v)-1]$.
Then
$$
\mathbb{P}(x\sim v,x\sim u)=\alpha^j\beta^{n-j}\alpha^{j+t}\beta^{n-j-t}=\alpha^{2j+t}.
$$
Hence the probability $\rho(v,u)$ that $v$ and $u$ have no common neighbours
in $\ccK$ is bounded from above by
\begin{align*}
\rho(v,u)&\le \prod\limits_{j=1}^{w(v)-1}\big(1-\alpha^{2j+t}\big)^{\binom {w(v)}{j}}
\le \exp\Big (- \sum\limits_{j=1}^{w(v)-1}{w(v)\choose j}\alpha^{2j+t}\Big)\\
&\le \exp\Big (-\alpha^t\big((\alpha^2+1)^{w(v)}-\alpha^{2w(v)}-1\big)\Big)\\
&\le\exp\left(-\frac{\left(\alpha^{\eta}(\alpha^2+1)^{1/2}\right)^n}{2}\right)=
\exp\left(-\frac12 (1+\eta)^n\right)
=o(|R|^2) \,,
\end{align*}
Thus a.a.s. each such pair $u,v\in R$ has a common neighbour. Therefore every vertex in $\ccK$ is a.a.s. connected to a vertex $(1,\ldots,1)$ by a path of length at most $1/2\eta$.
Consequently, a.a.s. between each pair of vertices of $\ccK$ there exists a path of
length  at most $1/\eta+2$, where, let us recall,
 $\eta$ is a constant which depends only on $\alpha$.

\section{The middle layer}\label{s:main}
In this section  we deal with the case when $\beta+\gamma>1$.
Let us recall that we always assume that $\alpha\ge \gamma$ however,
since if we decrease $\alpha $ the diameter can only increase, we may and
shall assume that  $\alpha=\gamma$.
Furthermore, for technical reasons, we assume also that $n$ is even; if this
is not the case one can split $\ccK$ into two disjoint random subgraphs for which
the underlying cube is of even dimension, use the result to infer
 that each of them is a.a.s.\  of
bounded diameter and finally verify that a.a.s.\ there exists at least one edge joining
two of them.

Denote by $\cHH$ a subgraph of $\ccK$ induced by the middle layer, i.e. by the set of vertices
$$
U=\left\{v\in V(\ccK):w(v)=n/2\right\}.
$$

In this section we show that the diameter of $\cHH$ is bounded and the following theorem holds.

\begin{theorem} For every  $\alpha=\gamma$ and $\beta+\gamma>1$,
there exists a constant   $c$, such that a.a.s.
\begin{equation*}
\diam(\cHH)<c\,.
\end{equation*}
	\label{thm:med}
\end{theorem}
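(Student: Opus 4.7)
The plan is to follow the outline from Section 2. Specialising to the middle layer with $\alpha=\gamma$, two vertices $x,y\in U$ at Hamming distance $2s$ are adjacent in $\cHH$ with probability $\alpha^{n-2s}\beta^{2s}$. Hence for any $v,u,x\in U$,
$$
\frac{\pr(x\sim v)}{\pr(x\sim u)}=\Big(\frac{\beta}{\alpha}\Big)^{d(v,x)-d(u,x)},
$$
which, by the triangle inequality, is bounded by $(\max(\alpha,\beta)/\min(\alpha,\beta))^{d(v,u)}$, a constant whenever $d(v,u)$ is. This uniform \emph{similarity} of the neighbourhood distributions of $v$ and $u$ is the feature that will drive the argument.

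I would first prove a local version: for $v,u\in U$ with $d(v,u)=2t_0$ and $t_0$ a small constant, a.a.s.\ $d_{\cHH}(v,u)\le 2k_0$ for some constant $k_0$. The proof is a coupled breadth-first exploration from $v$ and $u$, exposing edges in a prescribed order---first edges incident to $v$ inside $U$, then edges from $u$, then edges from the first BFS layer, and so on---with Chernoff-type concentration applied at each step. The central dichotomy at level $k$ is: either $|N_{\le k+1}(v)|\ge C|N_{\le k}(v)|$ for a constant $C>1$, so exponential growth continues; or a positive fraction of the potential $(k+1)$-neighbours of $v$ already lie in $N_{\le k}(v)$. In the latter case, the similarity bound forces a constant fraction of those candidates also to lie in $N_{\le k}(u)$, producing a common vertex and hence a path of length at most $2k+1$. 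In the former, after $O(1)$ iterations both BFS sets become macroscopic, and a birthday argument yields $N_{\le k_0}(v)\cap N_{\le k_0}(u)\ne\emptyset$.

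To extend the bound from close pairs to all of $U$, naively chaining through pairs at Hamming distance $O(1)$ yields only a $\Theta(n)$ bound on the diameter, which is insufficient. Instead, I would either push $t_0$ up to a linear fraction of $n$ in the local statement, or show that for every $v\in U$ the BFS reaches a positive fraction of $U$ in $O(1)$ levels and then apply the close-pair lemma inside that large set. The main technical obstacle I anticipate is controlling the regime where the similarity ratio $(\max(\alpha,\beta)/\min(\alpha,\beta))^{d(v,u)}$ is no longer $O(1)$: the candidates that dominate the growth of $|N_{\le k}(v)|$ live at a specific Hamming distance from $v$ determined by $\alpha$ and $\beta$, and one must argue that the corresponding candidates for $u$ overlap them substantially. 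A secondary delicate point is handling the conditioning across BFS levels while retaining enough independence for the concentration estimates to apply.
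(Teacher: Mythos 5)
Your high-level plan follows the sketch in Section~2, and your opening observation --- that for $v,u,x \in U$ with $\alpha=\gamma$ the ratio $\pr(x\sim v)/\pr(x\sim u) = (\beta/\alpha)^{d(v,x)-d(u,x)}$ is controlled by $d(v,u)$ --- is exactly the heuristic the paper also starts from. However, the two obstacles you flag at the end of your proposal are not secondary concerns to be dealt with later: they are the whole content of the proof, and as written your proposal does not overcome either of them.

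The first and crucial one is the tension you correctly identify: to avoid a chain of length $\Theta(n)$ you must work with $d(v,u)$ a linear fraction $\epsilon n$ of $n$, but then $(\max(\alpha,\beta)/\min(\alpha,\beta))^{d(v,u)}$ is exponentially large and the ``similarity'' bound is vacuous. The paper's fix is not to control this ratio but to eliminate it: it restricts attention to the set $U_I$ of vertices with equally many ones and zeros inside $I=\{i: u_i\ne v_i\}$, and then thins the edge set by deleting each edge with probability $\rho/\rho'$ so that every surviving edge has \emph{exactly} the same probability $\rho$. In this pruned graph $\tH$ the distributions of the $k$\textsuperscript{th} neighbourhoods of $u$ and of $v$ are literally identical, not merely comparable within a factor, which is what the whole coupling argument hinges on. Your proposal does not propose this (or any comparable) symmetrisation, and without it the ``constant fraction of candidates also lie in $N_{\le k}(u)$'' step fails in the regime $d(v,u)=\Theta(n)$.

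The second gap is concentration. You invoke ``Chernoff-type concentration at each step'' of a coupled BFS, but because neighbourhoods are very strongly correlated in this model (this is stressed in Section~2 as the reason the usual $G(N,p)$ machinery fails), naive level-by-level Chernoff will not deliver the union bound you need over all $\binom{n}{n/2}^2$ pairs. The paper instead (i) splits $\cHH$ randomly into $n^2$ edge-disjoint copies $\cH$, proves a weak constant-probability statement for one copy (Lemma~\ref{l:main}), and then uses Talagrand's inequality on the count of edge-disjoint short paths to boost to probability $1-\exp(-\eta n^2)$, which is strong enough for a union bound; and (ii) proves Lemma~\ref{l:main} itself not by a BFS dichotomy but by contradiction, via the stopping index $J=\min\{i: |\widehat N_i(v)|\le \xi^{ni/2}\}$ together with the ``bad vertex'' argument of Fact~\ref{bad}. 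Your proposal has no analogue of either device, so even the local statement is not established.
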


In order to verify  Theorem~\ref{thm:med} we show that a.a.s.\ each pair
of vertices $v$ and $u$ such that $d(v,u)$ is small is connected in
average  by  at least $n^{2}/4$
edge-disjoint paths of bounded length. However, although
the expected number of short paths between $v$ and $u$ grows exponentially
it is hard to control directly the size of the largest family of
edge-disjoint such paths. Thus, we  use the following simple and  natural approach.
Each edge of $\cHH$ we label randomly and independently with one of $n^2$ labels, i.e.
we split $\cHH$ randomly into $n^2$ edge disjoint parts. Of course each of $n^2$ parts is
the same random object, which can be obtained by keeping edges of $\cHH$ with probability $n^{-2}$.
We denote it by $\cH$. The crucial part of our argument is
captured by the following lemma.

\begin{lemma} For every  $\alpha=\gamma$ and $\beta+\gamma>1$,
there exist constants $\eps>0$ and $\xi>1$   such that
for each pair  $v,u$ of vertices of $\cH$ such that $d(u,v)<\epsilon n$
the probability that $u$ and $v$ are connected in $\cH$ by  a path of length at most  $4\lceil\log_{\xi}2\rceil$ is at least $1/4$.
	\label{l:main}
\end{lemma}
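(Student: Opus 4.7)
The plan is to explore $\cH$ by running breadth-first searches simultaneously from $v$ and from $u$ to depth $k_0:=2\lceil\log_{\xi}2\rceil$, and to show that with probability at least $1/4$ the two BFS trees meet, giving a joining path of length at most $2k_0=4\lceil\log_{\xi}2\rceil$. Throughout, for $w\in U$, we write $N_i(w)$ for the set of vertices at distance at most $i$ from $w$ in $\cH$.

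First we would calibrate the per-step growth factor. A saddle-point estimate, in the spirit of the computation carried out in the $\beta=1$ case above, shows that the expected middle-layer degree of a vertex in $\cHH$ satisfies
$$
\sum_{t=0}^{n/2}\binom{n/2}{t}^{2}\alpha^{n-2t}\beta^{2t}=\frac{(\alpha+\beta)^{n}}{\mathrm{poly}(n)},
$$
so after the $n^{-2}$ sub-sampling that defines $\cH$ the expected degree is still of order $(\alpha+\beta)^{n}/\mathrm{poly}(n)$. Hence for any fixed $\xi$ with $1<\xi<\alpha+\beta$, a single BFS step from a layer $A\subseteq U$ of sub-saturation size is expected to multiply $|A|$ by at least $\xi^{n}$. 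The constant $\epsilon>0$ will be chosen small later so that the similarity argument becomes quantitatively effective.

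The core technical step is a one-step expansion estimate: for a set $A\subset U$ with $|A|\le|U|/4$ whose Hamming-distance profile is typical, the expected number of new neighbors of $A$ in $\cH$ is at least $\xi^{n}|A|/\mathrm{poly}(n)$, and a second-moment bound gives that this lower bound is realized up to a constant factor with constant probability $p_0>0$. Iterating the estimate for $k_0$ rounds produces $|N_{k_0}(v)|\ge|U|/4$ with constant probability, and the same for $u$; when both occur, $N_{k_0}(v)\cap N_{k_0}(u)\neq\emptyset$ by pigeonhole. If instead the expansion stalls at some stage $k<k_0$ --- meaning $|N_{k+1}(v)|$ is much smaller than $\xi^{n}|N_k(v)|/\mathrm{poly}(n)$ --- then most candidates for new neighbors of $N_k(v)$ must already lie in $N_k(v)$. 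Here the similarity of $v$ and $u$ enters: since $d(u,v)<\epsilon n$, the ratio $p_{x,v}/p_{x,u}$ is of the form $(\beta/\alpha)^{O(\epsilon n)}$ for every $x\in U$, so the events $\{x\in N_i(v)\}$ and $\{x\in N_i(u)\}$ have essentially the same probability for all relevant $x$ and $i\le k$. Consequently a positive fraction of the vertices of $N_k(v)$ also lie in $N_k(u)$, again yielding a meeting vertex and a path of length at most $2k\le 2k_0$. Summing the contributions of the two branches, and absorbing the constant-probability losses into the safety margin encoded by the factor $2$ in $k_0=2\lceil\log_{\xi}2\rceil$, produces the claimed lower bound of $1/4$.

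The main obstacle is the second-moment estimate underlying the one-step expansion. In $G(N,p)$ the events $\{x\sim w\}$ for distinct $x$ are mutually independent, but in $\ccK$ they are strongly positively correlated whenever $x$ and the other vertices share many coordinates with $w$, because the Kronecker product of $\mathbf{P}$ weights every such coincidence in the same direction. To control these correlations one would partition the middle layer into classes indexed by Hamming-distance profile to the already explored set and run a class-by-class Chebyshev bound, exploiting the $n^{-2}$ sub-sampling to dampen the covariance contributions from correlated pairs. A secondary difficulty is propagating typicality of the BFS layer from step $k$ to step $k+1$: for the expansion step to keep firing, the Hamming-distance profile of $N_k(v)$ (and of $N_k(u)$) must remain concentrated near its expected shape throughout the whole iteration, which is where the assumption $\alpha=\gamma$ and the structural symmetry it provides will be crucial.
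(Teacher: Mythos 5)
Your plan is organized around a direct BFS expansion argument: show that each BFS layer grows by a factor of roughly $\xi^{n}$ via a one-step expansion lemma proved with a second-moment bound, iterate $k_{0}$ times to saturate a quarter of $U$, and conclude by pigeonhole. You flag the correlation structure as "the main obstacle," and indeed it is fatal: the paper explicitly observes (in the section ``The idea of the proof'') that the expansion properties of $\ccK$ are \emph{not} understood, that the events $x\sim v$ and $y\sim v$ are strongly correlated, and that for this reason the standard two-stage argument ``fails completely.'' Your central technical lemma is precisely the step the authors say they do not know how to carry out. The paper's proof of Lemma \ref{l:main} avoids it entirely: it is a proof by contradiction in which one assumes the meeting probability is below $1/4$, deduces via a coin-flip decoupling that the $(k-1)$-neighbourhoods of distinct neighbours of $v$ are pairwise disjoint with probability $\ge 1/2$, and then shows this disjointness would force $|\wN_{j}(v)|$ to exceed $\xi^{nj/2}$ at every level $j\le k$ up to and past the total vertex count --- a contradiction. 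No expansion is ever proved; the dichotomy ``either neighbourhoods expand or they collide'' is extracted from the \emph{negation} of the desired event, not established a priori.

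The second genuine gap is your similarity estimate. For general $v,u\in U$ with $d(u,v)<\epsilon n$ and $x\in U$, the ratio $p_{x,v}/p_{x,u}$ can be as large as $(\beta/\alpha)^{\Theta(\epsilon n)}$, which is exponentially large in $n$ when $\alpha\neq\beta$, so the events $\{x\in N_{i}(v)\}$ and $\{x\in N_{i}(u)\}$ do \emph{not} have ``essentially the same probability.'' To obtain an exact symmetry between $u$ and $v$, the paper passes to a carefully restricted subgraph $\tH$: the vertex set is pruned to $U_{I}$ (vertices with exactly $|I|/2$ ones inside $I$, where $I=\{i:u_{i}\neq v_{i}\}$), only edges with a prescribed match profile outside $I$ are kept, and the surviving edges are sub-sampled to a common probability $\rho$ from \eqref{eq:rho}, so that the part of the label inside $I$ is ``ignored.'' After this uniformization, the distribution of the explored neighbourhood around $u$ is \emph{identical}, not merely comparable, to that around $v$, and indeed coordinate permutations act transitively on $U_{I}$ as measure-preserving automorphisms of $\tH$. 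This device is what makes the decoupling step (inequality \eqref{eq23} $\Rightarrow$ \eqref{eq24}) go through, and it is absent from your sketch. Finally, your ``stalling'' branch conflates equality of marginal probabilities with overlap of the realized sets; even with perfect symmetry one cannot directly conclude that a positive fraction of $N_{k}(v)$ lies in $N_{k}(u)$ --- the paper instead derives a contradiction from the assumption that it does not, using the coin-flip decomposition and the random variable $J=\min\{i:|\wN_{i}(v)|\le\xi^{ni/2}\}$.
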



\begin{proof}
Let $u,v$  be two vertices from $U$ such that $d(u,v)<\epsilon n$,
	where $\eps>0$ is to be chosen later on.
We show that $u$ and $v$ are connected by a short path in a subgraph $\tH$
of $\cH$ which is defined as follows.

Let $I$ be the set of those $i\in[n]$ for which $u_i\neq v_i$.
	Clearly $|I|=d(u,v)$ and, since $w(u)=w(v)=n/2$,  $|I|$ is even.
The vertex set of $\tH$  consists only of those vertices which
have precisely $|I|/2$ ones inside $I$ and, consequently, exactly
$(n-|I|)/2$ ones in $[n]\setminus I$. We denote it by $U_I$, i.e.
	$$U_I=\{x\in U:\lvert\{ i\in I: x_i=1\}\rvert=\lvert\{ i\in I:x_i=0\}\rvert=|I|/2\}.$$
	Note that clearly $u,v \in U_I$.
	
Now we delete from $\cH$ some edges. So firstly, we leave in $\tH$ only those
edges which fulfill the following condition:
\begin{equation}\label{alpha,beta-neighbour1}
		\begin{aligned}
	&\lvert\{i:x_i=y_i\}\setminus I\rvert=\frac{\alpha}{\alpha+\beta}(n-|I|)\;.
	\end{aligned}
\end{equation}		
Furthermore, when computing the probability that vertices $x,y$ for which
the above holds are adjacent we would like to `ignore' the part of $x$ and $y$ which belong to
$I$. Note that the probability that such a given pair is connected by an edge in
$\cH$ is always at least
\begin{equation}\label{eq:rho}
\rho=\rho(\alpha,\beta)=\alpha^{\frac{\alpha}{\alpha+\beta}(n-|I|)}
		\beta^{\frac{\beta}{\alpha+\beta}(n-|I|)}(\min\{\alpha,\beta\})^{|I|}n^{-2}.
\end{equation}
Now,  each edge of $\cH$ for which the probability of existence $\rho'$ is larger
than~$\rho$ above we delete  with probability
$\rho/\rho'$. The edges which remain after these procedures are the edges of $\tH$.

Note that for every vertices $x$ and $y$ of $\tH$ which differ only on the set $I$
(such as $u$ and $v$) and
all other $z\in U_I$ the probability that $z$ is adjacent to $x$ is precisely the same
as the probability that $z$ is adjacent to $y$. Furthermore, all vertices
of $\tH$ are, in a way, equivalent. More precisely,
for every $x, y\in U_I$ a natural permutation of coordinates of $x$
induces a bijection of $U_I$
onto itself which transforms $x$ into $y$ and which preserves measure, i.e.
which transform~$\tH$ onto itself.

We shall show that degrees of vertices of  $\tH$  increase exponentially with~$n$.
Let us first make the following  useful observation. 	
It is easy to see (cf.~\cite{MahdianXu}) that
	 the expected size of neighbourhood of a vertex $v$ in $\ccK$ is equal
	 to
\begin{equation*}\label{eq:exp1}
\sum_{r=0}^{w(v)}\sum_{s=0}^{n-w(v)}\binom {w(v)}r\binom {n-w(v)}s\alpha^r\beta^{w(v)-r}\alpha^s
\beta^{n-w(v)-s}=(\alpha+\beta)^n.
\end{equation*}
Thus, the largest term in the sums on the left hand size is at least as large as
$(\alpha+\beta)^n/n^2$. Since we shall often refer to  this fact let us state it
explicitly. For every natural numbers $n,w$ and positive constants $\alpha, \beta<1$
we have
\begin{equation}\label{eq:exp}
\binom {w}{\frac{\alpha}{\alpha+\beta}w}
\binom {n-w}{\frac{\alpha}{\alpha+\beta}(n-w)}
\alpha^{\frac{\alpha}{\alpha+\beta}n}\beta^{\frac{\alpha}{\alpha+\beta}n}
\ge	(\alpha+\beta)^n/n^2.
\end{equation}

Using the above inequality we can easily estimate degrees of vertices in~$\tH$.
Let us denote by 	$\widehat{N}(x)$  the set of all  neighbours of~$x$ in $\tH$, and,
more generally,
by $\widehat{N}_i(x)$ we denote the $i$-th neighbourhood of $x$ (i.e.\ the set of all
$y$ which lies at the distance $i$ from $x$ in $\tH$). Then the following holds.

	\begin{fact}
Let $\eps>0$ be a small constant and let $I$ be  a given subset of $[n]$ such that
$|I|\le \eps n$. Then, if $\eps>0$ is small enough,
there exists a  constant $\xi>1$ such that 
for every $x\in U_I$ we have
$$\pr\left(|\widehat{N}(x)|\ge \xi^n\right)=1-o(1)\,.   $$
		\label{fakt:expDobreSasiedztwo}
	\end{fact}
	\begin{proof} For a given $x\in U_I$ the random variable $\widehat{N}(x)$
	has the binomial distribution $B(M,\rho)$, where
$$M=	{(n-|I|)/2\choose \frac{\alpha}{\alpha+\beta}(n-|I|)/2}^2,$$
and $\rho$ is given by \eqref{eq:rho}.
From (\ref{eq:exp}) and the fact that $|I|<\epsilon n$ we get
		\begin{align*}
\mathbb{E}\widehat{N}(x)&
		=(\min\{\alpha,\beta\})^{|I|}{(n-|I|)/2\choose \frac{\alpha}{\alpha+\beta}(n-|I|)/2}^2
		\alpha^{\frac{\alpha}{\alpha+\beta}(n-|I|)}
		\beta^{\frac{\beta}{\alpha+\beta}(n-|I|)}n^{-2}\\
		&
		\ge(\min\{\alpha,\beta\})^{\epsilon n}(\alpha+\beta)^{(1-\epsilon)n}/n^4.
		\end{align*}
Hence, if $\eps$ is small enough, for some constant $\xi>1$ we have
$\mathbb{E}\widehat{N}(x)\ge 2\xi^n$ and the assertion follows from
either Chernoff's or
Chebyshev's inequalities.
	\end{proof}
	
Now let $k=2\lceil \log_{\xi}2\rceil $. Recall that we need to show that
\begin{equation}\label{eq21}
\pr\left(\widehat{N}_k(v)\cap \widehat{N}_k(u)\neq\emptyset\right)\ge 1/4\,.
\end{equation}
In fact we shall show a slightly stronger inequality. Let us first split the
set $\wN(v)$ randomly into two sets $\wN_{(H)}(v)$ and $\wN_{(T)}(v)$ by tossing for
each vertex of $\wN(v)$ a symmetric coin. In the same way we
partition $\wN(u)$ into sets $\wN_{(H)}(u)$ and $\wN_{(T)}(u)$. Furthermore, let $\wN_k^{-y}(x)$ denote
the $k$-neighbourhood of $x$ in $\tH$ from which we removed an edge $\{x,y\}$ if such an edge exists. We shall show that
\begin{equation}\label{eq22}
\pr\left(\bigcup_{x\in \wN_{(T)}(v)}\widehat{N}_{k-1}^{-v}(x)\cap
\bigcup_{y\in \wN_{(H)}(u)}\widehat{N}^{-u}_{k-1}(y)\neq\emptyset\right)\ge 1/4.
\end{equation}
	
Let us assume that \eqref{eq22} does not hold, i.e. that we have
	
	\begin{equation}\label{eq23}
\pr\left(\bigcup_{x\in \wN_{(T)}(v)}\widehat{N}^{-v}_{k-1}(x)\cap
\bigcup_{y\in \wN_{(H)}(u)}\widehat{N}^{-u}_{k-1}(y)\neq\emptyset\right)< 1/4.
\end{equation}
			
We shall show that \eqref{eq23} leads to a contradiction.

			Let us start with a simple but crucial observation.
In order to check if the event
$$	\bigcup_{x\in \wN_{(T)}(v)}\widehat{N}^{-v}_{k-1}(x)\cap
\bigcup_{y\in \wN_{(H)}(u)}\widehat{N}^{-u}_{k-1}(y)\neq\emptyset$$
holds we need first to generate the set  $\bigcup_{x\in \wN_{(T)}(v)}\widehat{N}^{-v}_{k-1}(x)$
and then generate vertices of $\bigcup_{y\in \wN_{(H)}(u)}\widehat{N}^{-u}_{k-1}(y)$
until the moment when we meet the first vertex which belongs to both of these sets.
But the distribution of $\bigcup_{y\in \wN_{(H)}(u)}\widehat{N}^{-u}_{k-1}(y)$
is identical with $\bigcup_{y\in \wN_{(H)}(v)}\widehat{N}^{-v}_{k-1}(y)$. Thus,
\begin{multline*}
\pr\left(\bigcup_{x\in \wN_{(T)}(v)}\widehat{N}^{-v}_{k-1}(x)\cap
\bigcup_{y\in \wN_{(H)}(u)}\widehat{N}^{-u}_{k-1}(y)\neq\emptyset\right)\\
\ge
\pr\left(\bigcup_{x\in \wN_{(T)}(v)}\widehat{N}^{-v}_{k-1}(x)\cap
\bigcup_{y\in \wN_{(H)}(v)}\widehat{N}^{-v}_{k-1}(y)\neq\emptyset\right)
\end{multline*}
where we have inequality instead of equality because, perhaps, $\wN_{(T)}(v)\cap
\wN_{(H)}(u)\neq \emptyset$.			
			Hence, from \eqref{eq23} it follows that
$$\pr\left(\bigcup_{x\in \wN_{(T)}(v)}\widehat{N}^{-v}_{k-1}(x)\cap
\bigcup_{y\in \wN_{(H)}(v)}\widehat{N}^{-v}_{k-1}(y)\neq\emptyset\right)	<1/4,
$$
and, since we may toss a coin  after we generate all vertices
from $\widehat{N}(v)$,
	\begin{equation}\label{eq24}
\pr\left(\exists_{x,y\in \wN(v), x\neq y}\colon \widehat{N}^{-v}_{k-1}(x)\cap
\widehat{N}^{-v}_{k-1}(y)\neq\emptyset\right)< 1/2.
\end{equation}
The above inequality seems to lead to contradiction at very first sight.
Indeed, since the degree of vertices of $\tH$ grows like $\xi^n$, after
finite number of steps we explore all the vertices of $\tH$ and simply
 there will be no place for new vertices. However, keeping in
mind that the neighbours of a vertex of $\tH$ are strongly correlated, we have
to exclude the possibility that each $\wN^{-v}_{k-1}(x)$ spans a dense graph yet
for different $x$ and $y$ the sets  $\widehat{N}^{-v}_{k-1}(x)$ and
$\widehat{N}^{-v}_{k-1}(y)$  are disjoint.

Thus, let $J$ be a random variable, defined as
$$J=\min\{i: |\wN_i(v)|\le \xi^{ni/2}\}\,.$$
Note that since for $k=2\lceil 	\log_{\xi}2\rceil $
we have $\xi^{nk/2}>2^n$, $J$ takes only finite number of
values, more precisely, we have $J\in[k]$. Let $j\in [k]$
maximize the value of $\pr(J=j)$.
Then, clearly,

\begin{equation}\label{eq25}
\pr\left(|\wN_{j-1}(v)|\ge \xi^{n(j-1)/2}\ \&\ |\wN_{j}(v)|\le \xi^{nj/2}\right)
\ge \pr(J=j)\ge 1/k\,.
\end{equation}

Since with large probability we have  $|\wN(v)|\ge \xi^n$, and due to \eqref{eq25}
for each $x\in \wN(v)$ we have $|\wN_{j-1}(x)|\sim  |\wN_{j-1}(v)|\ge \xi^{n(j-1)/2}$,
then, clearly, if most of $\wN_{j-1}(x)$, $\wN_{j-1}(y)$ are disjoint (see \eqref{eq23})
we must also
have $|\wN_j(v)|\ge \xi^{n(j+1)/2}$ contradicting \eqref{eq25}.

Let us make the above heuristic argument rigorous. Recall that with probability $1-o(1)$
we have $|\wN(v)|\ge \xi ^n$ (Fact~\ref{fakt:expDobreSasiedztwo}). Furthermore,
since for every $x\in \wN(v)$ the random variables  $|\wN_{j-1}(x)|$ and $|\wN_{j-1}(v)|$
have identical distribution, and the degree of $x$ is binomially distributed with
exponential expectation, from \eqref{eq25} it follows that for every
 $x\in \wN_1(v)$ we have
 \begin{equation}\label{eq26}
\pr\left(|\wN_{j-1}^{-v}(x)|\ge \xi^{n(j-1)/2}/2\right)
\ge 1/(2k)\,.
\end{equation}
Let us generate all neighbours of $v$ and order them in a sequence
$x_1,x_2,\dots, x_r$, where, as we have just observed, with probability $1-o(1)$
we have $r\ge \xi^n$. Then,  for each $x_i$ we generate a set $W_i$
in the following way. We set  $W_1=\wN^{-v}_{j-1}(x_1)$.
Once all sets $W_1,\dots, W_{i-1}$ are found, we generate vertices of
the $(j-1)$-neighbourhoods of $x_i$
one by one and stop when we either find the whole neighbourhood $\wN^{-v}_{j-1}(x_i)$,
or when we first hit an element of $\bigcup_{s=1}^{i-1}W_s$. In this latter case
we set  as $W_i$ the set of all vertices of  $\wN^{-v}_{j-1}(x_i)$ generated so far
and mark $x_i$ as {\em bad}.

\begin{fact}\label{bad}
With probability $1-o(1)$ none of the vertices $x_1,x_2,\dots, x_t$, where $t=\xi^{2n/3}$,
is bad.
\end{fact}

\begin{proof} During the execution of the procedure of generating $W_1,W_2,\dots, W_r$, the probabilities
$\theta_i$ that $W_i$ is bad increases with $i$.
Thus, for every $i\in [t]$, $\theta_i\le \theta_{t+1}$. On the other hand
the probability that there are bad vertices among $x_{t+1},x_{t+2}, \dots, x_r$ is
bounded from below by $1-(1-\theta_{t+1})^{r-t}$ while, from our assumption \eqref{eq24},
it is bounded from above by $1/2$.
Hence,
$$1-(1-\theta_{t+1})^{r-t}<1/2\,,$$
and so, very crudely, $\theta_{t+1}\le \xi^{-3n/4}$.  Thus, the probability that
one of vertices $x_1,\dots, x_t$ is bad is bounded from above by
$$\sum_{i=1}^t\theta_i\le  t\theta_{t+1}\le \xi^{2n/3}\xi^{-3n/4}=o(1).\quad\qed$$
\renewcommand{\qed}{}
\end{proof}
Since with probability $1-o(1)$  sets $W_i$,  $i=1,2,\dots,t$, 
are disjoint and thus equal to $\wN_{j-1}^{-v}(x_i)$, and since  
each of such sets with probability $1/(2k)$ have size at least $\xi^{(j-1)n/2}/2$,
with probability $1-o(1)$ we have
$$|\wN_{j}(v)|\ge  \left\lvert\bigcup_{i=1}^t \wN_{j-1}^{-v}(x_i)\right\rvert/(3k)\ge t \xi^{(j-1)n/2}/(6k)>
\xi^{jn/2}$$
contradicting \eqref{eq25}. Thus, \eqref{eq23} leads to a contradiction and
Lemma~\ref{l:main} holds.
\end{proof}

\begin{proof}[Proof of Theorem~\ref{thm:med}]
Let $\epsilon$ and $\xi$ be constants for which the assertion of
Lemma~\ref{l:main} holds. Consider two vertices $v,u$ of $\cHH$ such that
$d(u,v)<\epsilon n$ and let $X_{v,u}$ denote the maximum number of
edge-disjoint path of length at most $4\lceil\log _\xi 2\rceil$ joining $v$ and $u$ in $\cHH$.
From Lemma~\ref{l:main} it follows that $\mathbb{E} X_{v,u}\ge n^2/4$.
 Since adding or removing a single  edge cannot affect the value of $X_{v,u}$
by more than one, and the fact  that $X_{v,u}\ge m$ can be verified by  exposing only
$4m\lceil\log_{\xi}2 \rceil$ edges, from Talagrand's inequality (see, for instance,~\cite{JLR}, Theorem~2.29) we get that for some constant $\eta=\eta(\eps,\xi)>0$
$$\mathbb{P}(X_{v,u}< n^2/8)\le \exp(-\eta n^2)\,.$$
Thus, by the first moment method a.a.s. each pair of vertices $v,u$, of $\cHH$
such that
$d(v,u)\le \eps n$ is connected by a path of length at most $4\lceil \log_{\xi}2 \rceil$.
To complete the proof it is enough to observe that
 for every pair of vertices $u,u'$ of $\cHH$  one can find a sequence
$$
u=u_0, u_1,\ldots, u_r=u'
$$
such that for $i\in[r]$, $d(u_i,u_{i-1})<\epsilon n$, and $r<2/\epsilon$.
Consequently,
$$
\diam(\cHH)\le  8\lceil \log_{\xi}2/\epsilon\rceil .\quad \qed
$$
\renewcommand{\qed}{\ }
\end{proof}

\section{A final touch: reaching the middle layer}
In this section we prove that a.a.s. every vertex of $\ccK$ is joined
to the middle layer by a short path, i.e. the following result holds.
\begin{theorem}
For every $\alpha=\gamma$ and $\beta+\gamma>1$ there exists $c'=c'(\alpha, \beta)$ such that
a.a.s. every vertex $v$ of $\ccK$ is connected by a path of length at most~$c'$
to the middle layer of $\ccK$.
\label{thm:notmid}
\end{theorem}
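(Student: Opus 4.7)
My plan is to build, for each vertex $v\in V$, a short path $v=v_0,v_1,\ldots,v_{t_0},v_{t_0+1}$ in $\ccK$ with $v_{t_0+1}\in U$, in which the intermediate weights $w(v_i)$ are driven geometrically towards $n/2$ and the last step lands directly in the middle layer. The crucial observation is that because $\alpha=\gamma$, the edge probability reduces to $p_{u,v}=\alpha^{n-d(u,v)}\beta^{d(u,v)}$, and the generating function of edge weights out of a fixed $v$ of weight $w$ factors as
$$
f_v(x)=\sum_{u\in V}p_{u,v}\,x^{w(u)}=(\beta+\alpha x)^{w}(\alpha+\beta x)^{n-w}.
$$
Writing $T=\alpha+\beta>1$ and $a=\alpha/T\in(0,1)$, the weight of a neighbour of $v$ sampled proportionally to $p_{\cdot,v}$ is a sum of $w$ Bernoullis with parameter $a$ and $n-w$ Bernoullis with parameter $1-a$, and therefore has mean $n/2+(w-n/2)(2a-1)$ --- strictly closer to $n/2$ than $w$ by the factor $|2a-1|<1$, which is guaranteed since both $\alpha$ and $\beta$ lie in $(0,1)$.

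The first step of the argument turns this into a deterministic one-step statement: for every $v$ of weight $w$ and every target weight $w^{\ast}$ with $|w^{\ast}-(wa+(n-w)(1-a))|\le C\sqrt{n\log n}$ (where $C=C(\alpha,\beta)$), a local CLT estimate for the coefficient $[x^{w^{\ast}}]f_v(x)$ shows that the expected number of neighbours of $v$ of weight exactly $w^{\ast}$ is at least $T^{n}/n^{2}$; a Chernoff bound on the corresponding binomial variable then gives failure probability of order $\exp(-T^{\Omega(n)})$, comfortably surviving a union bound over all $v$ and all admissible $w^{\ast}$. Now fix $\delta>0$ small enough that $2\delta^{2}(2a-1)^{2}<\log T$, and let $t_0$ be the smallest integer with $\tfrac12|2a-1|^{t_0}<\delta/2$. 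Starting from $v_0=v$ and invoking the one-step lemma at step $i$ with target $w(v_i)a+(n-w(v_i))(1-a)$ yields a sequence in which $|w(v_i)-n/2|$ contracts geometrically, so after $t_0$ steps (a constant, since $|2a-1|<1$) we have $|w(v_{t_0})-n/2|\le\delta n+o(n)$. One further application of the coefficient estimate to $v_{t_0}$, now with target $w^{\ast}=n/2$, shows that the expected number of neighbours of $v_{t_0}$ in $U$ is at least
$$
\frac{T^{n}}{\sqrt{n}}\exp\bigl(-2n\delta^{2}(2a-1)^{2}(1+o(1))\bigr),
$$
which is exponentially large by our choice of $\delta$; hence $v_{t_0}$ has a neighbour in $U$ a.a.s., and we may take $c'=t_0+1$.

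The main obstacle is uniformity: the entire path must exist simultaneously for all $2^{n}$ starting vertices $v$. This is handled by designing the one-step lemma so that each of its failure probabilities is $o(2^{-cn})$, which is automatic since the relevant binomial random variable has exponentially large mean $T^{\Omega(n)}$; a union bound over the $2^{n}$ starting vertices, the $O(1)$ path positions and the $O(n)$ candidate intermediate weights then still gives $o(1)$ total failure. A secondary technicality is independence of the successive exposures: when the edges incident to $v_i$ are revealed in order to produce $v_{i+1}$, the edges from $v_{i+1}$ to $V\setminus\{v_0,\ldots,v_i\}$ are still an untouched collection of independent Bernoullis, so the one-step lemma may be reapplied verbatim at the next step without any conditioning correction.
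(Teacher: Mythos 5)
Your proposal is correct and essentially reproduces the paper's own strategy: a one-step lemma contracting $|w(v)-n/2|$ geometrically by the factor $|\alpha-\beta|/(\alpha+\beta)$ (your $|2a-1|$; the paper's Lemma~\ref{l:m1}), iterated a bounded number of times (Lemma~\ref{l:m2}), followed by a final jump from a vertex with $|w-n/2|\le\delta n$ directly into $U$ (Lemma~\ref{l:m3}), the whole thing made uniform over the $2^n$ starting vertices by a union bound against the exponentially small failure probabilities coming from Chernoff applied to a binomial with mean $(\alpha+\beta)^{\Omega(n)}$. One harmless slip: the correct Gaussian-type exponent in your last display is $\delta^{2}(2a-1)^{2}/(2a(1-a))$ rather than $2\delta^{2}(2a-1)^{2}$ (and since $a(1-a)\le 1/4$ your stated bound is not a genuine lower bound), so the criterion for choosing $\delta$ should read $\delta^{2}(2a-1)^{2}/(2a(1-a))<\log T$; as $\delta$ is a free parameter that you may take as small as you like, this changes nothing substantive.
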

Theorem~\ref{thm:notmid} is a direct consequence of the following three lemmata.

\begin{lemma}\label{l:m1}
	Let $\alpha=\gamma$ and $\beta+\gamma>1$. Then a.a.s.
each vertex  $v$  of $\ccK$ of weight $w(v)\neq n/2$ has a neighbour of weight
	$$
\frac{n}{2}+\frac{\alpha-\beta}{\alpha+\beta}\left(w(v)-\frac{n}{2}\right).
	$$
\end{lemma}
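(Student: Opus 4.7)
The approach is a first-moment estimate combined with the trivial bound $\pr(X=0)\le e^{-\mathbb{E} X}$ for sums of independent indicators, followed by a union bound over all $2^n$ vertices. Fix $v$ with $w:=w(v)\neq n/2$ and let $w'$ denote the integer nearest the target weight stated in the lemma. A vertex $u$ of weight $w'$ that shares $r$ of its ones with $v$ (and hence $s:=w'-r$ ones outside the support of $v$) is adjacent to $v$ with probability $\alpha^{r}\beta^{w-r}\beta^{s}\alpha^{n-w-s}$, where we have used $\alpha=\gamma$. Since the edges of $\ccK$ are independent, grouping candidates by $r$ yields
$$
E(v,w')\;=\;\sum_{r+s=w'}\binom{w}{r}\alpha^{r}\beta^{w-r}\cdot\binom{n-w}{s}\beta^{s}\alpha^{n-w-s}
$$
for the expected number of weight-$w'$ neighbours of $v$.

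The key computation is that the target weight coincides with $r_*+s_*$, where $r_*:=\alpha w/(\alpha+\beta)$ and $s_*:=\beta(n-w)/(\alpha+\beta)$ are precisely the modes of the two binomial factors in the product above (this is a direct calculation from the formula defining $w'$). Each factor sums respectively to $(\alpha+\beta)^w$ and $(\alpha+\beta)^{n-w}$, so its peak term is at least its total divided by $n+1$. Retaining only the summand at $(r_*,s_*)$---the same device used to justify \eqref{eq:exp}---therefore gives $E(v,w')\ge (\alpha+\beta)^n/n^2$. Since $\beta+\gamma>1$ and $\alpha=\gamma$ force $\alpha+\beta>1$, we obtain $E(v,w')\ge \xi^n/n^2$ for some $\xi>1$ and all sufficiently large $n$. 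Hence
$$
\pr\bigl(N(v,w')=0\bigr)\;\le\;\exp\bigl(-\xi^n/n^2\bigr),
$$
and a union bound over $\zzz$ gives $2^n\exp(-\xi^n/n^2)=o(1)$, proving the lemma.

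The only mild technicality---and really the only place where one must be careful---is integrality: $w'$, $r_*$ and $s_*$ need not be integers. One resolves this by rounding $r_*$ to its nearest integer and setting $s_*:=w'-r_*$, and by noting that perturbing the peak index of a binomial by $O(1)$ alters its value by only a bounded multiplicative factor, easily absorbed in the $n^{-2}$ slack of the bound $E(v,w')\ge(\alpha+\beta)^n/n^2$.
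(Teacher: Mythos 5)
Your proof is correct and follows essentially the same route as the paper: it singles out the same mode class of candidates (those with $\tfrac{\alpha}{\alpha+\beta}w$ common ones and $\tfrac{\beta}{\alpha+\beta}(n-w)$ ones off the support of $v$), bounds the expectation below via the peak-term estimate underlying \eqref{eq:exp}, applies $\pr(X=0)\le e^{-\mathbb{E}X}$ (the paper uses the equivalent Chernoff tail), and finishes with a union bound over $\zzz$. The only cosmetic difference is that you write the full sum $E(v,w')$ and then restrict to the summand at $(r_*,s_*)$, whereas the paper considers that vertex class from the outset; the integrality remark matches the paper's tacit rounding.
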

\begin{proof} For a vertex $v$  consider the set of all vertices of $\ccK$ which have respectively $\frac{\alpha}{\alpha+\beta}w(v)$ and $\frac{\alpha}{\alpha+\beta}(n-w(v))$ common ones and common zeros with $v$. There are precisely
	$$
	{w(v)\choose \frac{\alpha}{\alpha+\beta}w(v)}{n-w(v)\choose\frac{\alpha}{\alpha+\beta}(n-w(v))}
	$$
of them. Each of these vertices has weight
	$$
	\frac{\alpha}{\alpha+\beta}w(v)+\frac{\beta}{\alpha+\beta}(n-w(v))=\frac{n}{2}+\frac{\alpha-\beta}{\alpha+\beta}\left(w-\frac{n}{2}\right).
	$$
	Moreover, each of them is adjacent to $v$ with probability
	$$
	\alpha^{\frac{\alpha}{\alpha+\beta}n}\beta^{\frac{\beta}{\alpha+\beta}n}.
	$$
	Let $X(v)$ be a random variable which counts those neighbours of $v$. Then, by (\ref{eq:exp}),
	we get
	$$
	\mathbb{E}X(v)={w(v)\choose \frac{\alpha}{\alpha+\beta}w(v)}{n-w(v)\choose\frac{\alpha}{\alpha+\beta}(n-w(v))}\alpha^{\frac{\alpha}{\alpha+\beta}n}\beta^{\frac{\beta}{\alpha+\beta}n}\ge\frac{(\alpha+\beta)^n}{n^2}.
	$$
	By Chernoff bound, with probability at least
	$$
	1-\exp(-\mathbb{E}X(v)/8)\ge 1-\exp(-n^2),
	$$
	$v$ has more than $\frac{1}{2}\mathbb{E}X(v)$ neighbours of weight
	$$
	\frac{n}{2}+\frac{\alpha-\beta}{\alpha+\beta}\left(w-\frac{n}{2}\right).
	$$
Hence, by the first moment method, the probability that $\ccK$ contains a vertex
which does not have this property tends to 0 as $n\to\infty$.
\end{proof}
\begin{lemma}\label{l:m2}
 	Let $\alpha=\gamma$ and $\beta+\gamma>1$, $\eps>0$, and let
	$$
	b=\log_{\left\lvert\frac{\alpha-\beta}{\alpha+\beta}\right\rvert}(\epsilon).
	$$
Then a.a.s. every vertex $v$ of $\ccK$ such that $|w(v)-n/2|>\epsilon n$
is connected by a path  of length at most $b$ to a vertex $u$ with weight $w(u)$ such that
	$$
	\Big|w(u)-\frac{n}{2}\Big|\le\epsilon\left(w(v)-\frac{n}{2}\right).
	$$
\end{lemma}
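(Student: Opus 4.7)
The plan is to iterate Lemma~\ref{l:m1}. Write $q=(\alpha-\beta)/(\alpha+\beta)$, so that $|q|<1$ and, by the definition of $b$, $|q|^{b}=\epsilon$. If $\alpha=\beta$, then $q=0$ and Lemma~\ref{l:m1} already produces a neighbour of exact weight $n/2$, giving a path of length $1$; so assume $\alpha\ne\beta$.

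Since Lemma~\ref{l:m1} holds a.a.s.\ simultaneously for \emph{every} vertex of $\ccK$, I condition on the event that every vertex $x$ of weight $w(x)\neq n/2$ has a neighbour $x'$ with $w(x')-n/2=q(w(x)-n/2)$. Starting from the given $v$ with $|w(v)-n/2|>\epsilon n$, I apply this repeatedly to build a sequence $v=v_{0},v_{1},\ldots,v_{k}$ where $v_{i+1}$ is adjacent to $v_{i}$ in $\ccK$ and
$$w(v_{i+1})-\tfrac{n}{2}=q\bigl(w(v_{i})-\tfrac{n}{2}\bigr).$$
An immediate induction gives $w(v_{k})-n/2=q^{k}(w(v)-n/2)$. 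Taking $k=\lceil b\rceil$ yields $|q|^{k}\le |q|^{b}=\epsilon$, whence
$$\bigl|w(v_{k})-\tfrac{n}{2}\bigr|=|q|^{k}\bigl|w(v)-\tfrac{n}{2}\bigr|\le \epsilon\bigl|w(v)-\tfrac{n}{2}\bigr|,$$
so $u:=v_{k}$ has the required property and is reached from $v$ by a path of length at most $b$ (with the usual rounding convention for non-integer $b$).

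The one genuine source of friction is arithmetic rather than probabilistic: the quantity $\tfrac{\alpha}{\alpha+\beta}w(x)$ need not be an integer, so the prescribed neighbour in Lemma~\ref{l:m1} is only defined up to rounding to a nearest admissible weight. Over $k=O(1)$ iterations the accumulated rounding error is $O(k)=O(1)$, which is negligible against $\epsilon|w(v)-n/2|>\epsilon^{2}n$, and can be absorbed by infinitesimally shrinking $\epsilon$. So no step is hard; the proof is simply a routine $O(1)$-fold iteration of the previous lemma, made possible because its conclusion is uniform over all vertices of $\ccK$.
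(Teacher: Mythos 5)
Your proof is correct and takes essentially the same route as the paper: iterate Lemma~\ref{l:m1} exactly $b$ times and solve the resulting linear recurrence for $w(v_i)-n/2$. The paper states this more tersely, with no explicit discussion of the degenerate case $\alpha=\beta$ or of the integer-rounding issue that you flag; your additional remarks are accurate and harmless, but the underlying argument is the same.
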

\begin{proof}
By Lemma~\ref{l:m1}, a.a.s. for each vertex $v$ there exists a path
	$$
	v=v_0\sim v_1\sim v_2\sim\ldots v_b
	$$
	such that
	$$w(v_i)=\frac{n}{2}+\frac{\alpha-\beta}{\alpha+\beta}\left(w(v_{i-1})-\frac{n}{2}\right),$$
	for $i\in[b]$.
	Solving the above recurrence we get
	$$
	w(v_b)=\frac{n}{2}+\left(\frac{\alpha-\beta}{\alpha+\beta}\right)^b\left(w(v)-\frac{n}{2}\right),
	$$
and so the assertion follows.
\end{proof}

\begin{lemma}\label{l:m3}
Let $\alpha=\gamma$ and $\beta+\gamma>1$. Moreover, let $\epsilon>0$ be such that
	\begin{equation}
	(\alpha+\beta)^{1-\epsilon}(4\alpha\beta)^{\epsilon}>1.
	\label{epsilon}
	\end{equation}
Then a.a.s. every vertex $v$ of $\ccK$ of weight $w(v)$ such that $|w(v)-n/2|\le \epsilon n/2$
has a neighbour with weight $n/2$.
\end{lemma}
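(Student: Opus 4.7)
The plan is to mimic the argument of Lemma~\ref{l:m1}. For each vertex $v$ with $w(v) = n/2 + r$ and $|r| \le \epsilon n/2$, I would show that the expected number $E(v)$ of weight-$n/2$ neighbours of $v$ grows as $\xi^n$ for some constant $\xi > 1$. Once we have this, the probability that $v$ has no weight-$n/2$ neighbour is at most $\prod_{u:\,w(u)=n/2}(1-p_{uv}) \le \exp(-E(v)) = \exp(-\xi^n)$, and a union bound over the $2^n$ vertices of $\ccK$ completes the proof.

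By symmetry, I may assume $r \ge 0$. Each weight-$n/2$ candidate $u$ is obtained from $v$ by flipping $r+s$ of $v$'s ones and $s$ of $v$'s zeros for some $s \ge 0$, has Hamming distance $d = r + 2s$ from $v$, and is joined to $v$ with probability $\alpha^{n-d}\beta^d$ (recall $\alpha = \gamma$). Hence
\begin{equation*}
E(v) \;=\; \sum_{s=0}^{n/2-r}\binom{n/2+r}{r+s}\binom{n/2-r}{s}\alpha^{n-r-2s}\beta^{r+2s}
\;=\; [z^{n/2}](\beta+\alpha z)^{n/2+r}(\alpha+\beta z)^{n/2-r}.
\end{equation*}
The key step is the factorisation
\begin{equation*}
(\beta+\alpha z)^{n/2+r}(\alpha+\beta z)^{n/2-r} \;=\; (\beta+\alpha z)^{2r}\cdot\bigl[(\beta+\alpha z)(\alpha+\beta z)\bigr]^{n/2-r};
\end{equation*}
extracting the monomial $\binom{2r}{r}(\alpha\beta)^r z^r$ from the first factor reduces the problem to the middle coefficient of the second factor, which is exactly the expected number of weight-$(n/2-r)$ neighbours of a weight-$(n/2-r)$ vertex in $\cK(n-2r,\mathbf{P})$. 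Inequality~(\ref{eq:exp}) (applied with $n \leftarrow n-2r$ and $w \leftarrow (n-2r)/2$) bounds it below by $(\alpha+\beta)^{n-2r}/(n-2r)^2$, and combining with the standard central-binomial estimate $\binom{2r}{r} \ge c\cdot 4^r/\sqrt{r+1}$ gives
\begin{equation*}
E(v) \;\ge\; \frac{(4\alpha\beta)^{r}(\alpha+\beta)^{n-2r}}{\mathrm{poly}(n)}.
\end{equation*}

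For $r \le \epsilon n/2$ this is at least $\bigl[(4\alpha\beta)^{\epsilon/2}(\alpha+\beta)^{1-\epsilon}\bigr]^n/\mathrm{poly}(n)$, and a short case analysis confirms that the hypothesis $(\alpha+\beta)^{1-\epsilon}(4\alpha\beta)^{\epsilon} > 1$ implies $(4\alpha\beta)^{\epsilon/2}(\alpha+\beta)^{1-\epsilon} > 1$: when $4\alpha\beta \le 1$ one has $(4\alpha\beta)^{\epsilon/2} \ge (4\alpha\beta)^{\epsilon}$, so the weaker bound follows directly from the hypothesis, while when $4\alpha\beta > 1$ both factors exceed $1$ on their own. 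Hence $E(v) \ge \xi^n$ for some constant $\xi > 1$, and the union-bound argument above finishes the proof. The main obstacle is precisely this lower bound on $E(v)$: the naive approach of keeping only the largest summand in the defining sum yields only $(4\alpha\beta)^{n/2}/\mathrm{poly}(n)$, which would force the much stronger hypothesis $4\alpha\beta > 1$; the factorisation above is what preserves the full $(\alpha+\beta)^{n-2r}$ rate coming from the symmetric factor while paying only a polynomial cost in $n$ for the asymmetry.
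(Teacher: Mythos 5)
Your argument is correct and follows essentially the same path as the paper's: both isolate a ``balanced'' block of $n-2r$ coordinates, on which inequality~(\ref{eq:exp}) yields the rate $(\alpha+\beta)^{n-2r}$, and an ``imbalanced'' block of $2r$ coordinates handled by the central binomial coefficient, yielding $(4\alpha\beta)^r$. The paper realises this split combinatorially, constructing an explicit family $A$ of weight-$n/2$ vertices with a prescribed overlap pattern and counting it directly; you realise it algebraically through the factorisation $(\beta+\alpha z)^{n/2+r}(\alpha+\beta z)^{n/2-r}=(\beta+\alpha z)^{2r}\bigl[(\beta+\alpha z)(\alpha+\beta z)\bigr]^{n/2-r}$ followed by coefficient extraction, which amounts to restricting to the same structured subfamily. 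The underlying estimate is identical. One point where your write-up is more careful: the exact rate these computations produce is $\bigl((\alpha+\beta)^{1-\eta}(4\alpha\beta)^{\eta/2}\bigr)^n/\mathrm{poly}(n)$ with $\eta=2r/n$, since $\binom{\eta n}{\eta n/2}\alpha^{\eta n/2}\beta^{\eta n/2}\approx(4\alpha\beta)^{\eta n/2}/\mathrm{poly}(n)$, whereas the paper displays $\bigl((\alpha+\beta)^{1-\eta}(4\alpha\beta)^{\eta}\bigr)^n/n^2$ with the larger exponent $\eta$ on $4\alpha\beta$. The short case analysis with which you check that hypothesis~(\ref{epsilon}) nevertheless forces $(\alpha+\beta)^{1-\eta}(4\alpha\beta)^{\eta/2}>1$ for all $\eta\le\epsilon$ is therefore a worthwhile addition that the paper leaves implicit.
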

\begin{proof}
For a given vertex $v$, let 	$|w(v)-n/2|=\eta n/2$, where $\eta\le\epsilon$. Let us choose $(1-\eta)n/2$ ones and $(1-\eta)n/2$ zeros in the label of $v$. Let $A$ denote the set of
those vertices, which have precisely $\frac{\alpha}{\alpha+\beta}(1-\eta)n/2$ ones among the chosen one-positions in the label of $v$,  a $\frac{\alpha}{\alpha+\beta}(1-\eta)n/2$ zeros among the chosen zero-positions in the label of $v$, and have $\eta n/2$ zeros among remaining $\eta n$ positions. Then
	$$
	|A|={(1-\eta)n/2\choose \frac{\alpha}{\alpha+\beta}(1-\eta)n/2}^2{\eta n \choose \frac{1}{2}\eta n}.
	$$
	Furthermore, for every $u\in A$, the probability that $u$ is a neighbour of $v$ is
	$$
	\alpha^{\frac{\alpha}{\alpha+\beta}(1-\eta)n}\beta^{\frac{\beta}{\alpha+\beta}(1-\eta)n}\alpha^{\frac{1}{2}\eta}\beta^{\frac{1}{2}\eta}.
	$$
	Let $X(v)$ be a random variable which counts neighbours of $v$ in $A$. Then, by (\ref{eq:exp}),
	we have
	\begin{equation*}
	\begin{split}
	\mathbb{E}X(v)&={(1-\eta)n/2\choose \frac{\alpha}{\alpha+\beta}(1-\eta)n/2}^2{\eta n \choose \frac{1}{2}\eta n}\alpha^{\frac{\alpha}{\alpha+\beta}(1-\eta)n}\beta^{\frac{\beta}{\alpha+\beta}(1-\eta)n}\alpha^{\frac{1}{2}\eta}\beta^{\frac{1}{2}\eta}\\&
	\ge
	\left((\alpha+\beta)^{1-\eta}(4\alpha\beta)^{\eta}\right)^n/n^2.
	\end{split}
	\end{equation*}
	As $\eta\le\epsilon$, from \eqref{epsilon} we get
	$$
	\mathbb{E}X(v)\ge\zeta^n,
	$$
for some $\zeta>1$. Moreover for every $u\in A$,
	$$
	w(u)=\frac{\alpha}{\alpha+\beta}(1-\eta)n/2+\frac{\beta}{\alpha+\beta}(1-\eta)n/2+\eta n/2=n/2.
	$$
	By Chernoff's inequality, with probability at least $1-\exp(-\mathbb{E}X(v)/8)$, $v$ has a neighbour with weight $n/2$. Thus, the probability that some vertex $v$ of weight $w(v)$ such that
	$$
	|w(v)-n/2|\le \epsilon n/2\,,
	$$
has no neighbours of weight $n/2$ tends to 0 as $n\to\infty $.
\end{proof}

\begin{proof}[Proof of Theorem~\ref{thm:notmid}] Theorem~\ref{thm:notmid} is a
straightforward consequence of Lemmata~\ref{l:m1}, \ref{l:m2}, and \ref{l:m3}.
\end{proof}

Now it is easy to complete the proof of Theorem~\ref{thm:diam}.

\begin{proof}[Proof of Theorem~\ref{thm:diam}] From Theorem~\ref{thm:notmid}
we know that for some constant $c'$ a.a.s. from every vertex $v$ of $\ccK$ we can reach the middle layer in  at most $c'$ steps. Moreover, Theorem~\ref{thm:med} states that for some constant $c$
 a.a.s.  the
diameter of the subgraph of $\ccK$ induced by the middle layer is at most $c$.
Consequently, the diameter of $\ccK$ is a.a.s. bounded from above by $c+2c'$.
 \end{proof}

\bigskip

{\bf Acknowledgment}: The authors were partially supported   by Maestro NCN grant 2012/06/A/ST1/00261.


\end{document}